\newtheorem{thm}{Theorem}[section]
\newtheorem{lem}[thm]{Lemma}
\newtheorem{prop}[thm]{Proposition}
\newtheorem{ex}[thm]{Example}
\newtheorem*{prob*}{Open problem}
\theoremstyle{definition}
\newtheorem{defi}[thm]{Definition}
\theoremstyle{remark}
\newtheorem{rem}[thm]{Remark}
\newtheorem*{rem*}{Remark}
\newcommand{\kringel}{\mathbin{\raise1pt\hbox{$\scriptstyle\circ$}}}
\newcommand{\pkt}{\mathbin{\raise0pt\hbox{$\scriptstyle\bullet$}}}
\newcommand{\CB}{\mathcal{B}}
\newcommand{\CG}{\mathcal{G}}
\newcommand{\al}{\alpha}
\newcommand{\be}{\beta}
\newcommand{\ga}{\gamma}
\newcommand{\de}{\delta}
\renewcommand{\phi}{\varphi}
\begin{document}


\title[Jacobi-Jordan algebras]{Jacobi-Jordan algebras}

\author[D. Burde]{Dietrich Burde}
\author[A. Fialowski]{Alice Fialowski}
\address{Fakult\"at f\"ur Mathematik\\
Universit\"at Wien\\
Oskar-Morgenstern-Platz 1\\
1090 Wien \\
Austria}
\email{dietrich.burde@univie.ac.at}
\address{E\"otv\"os Lor\'and University\\
Institute of Mathematics\\
P\'azm\'any P\'eter s\'et\'any 1/C\\
1117 Budapest, Hungary}
\email{fialowsk@cs.elte.hu }

\date{\today}

\subjclass[2000]{Primary 17C10, 17C55}
\keywords{Jacobi identity, Jordan algebra, commutative nilalgebra}

\begin{abstract}
We study finite-dimensional commutative algebras, which satisfy 
the Jacobi identity. Such algebras are Jordan algebras. We describe some
of their properties and give a classification in dimensions $n<7$ 
over algebraically closed fields of characteristic not $2$ or $3$.
\end{abstract}

\maketitle

\section{Introduction}

Finite-dimensional commutative associative algebras (with and without $1$) 
have been studied intensively. In particular, many different classification results
have appeared in the literature, see \cite{POO}, \cite{DGR}, \cite{FIP1}, \cite{FIP2} for recent
publications, and the references given therein. On the other hand, many important
classes of commutative {\it non-associative} algebras have been studied, too. 
Examples are Jordan algebras, commutative power-associative algebras, commutative
alternative algebras, Bernstein algebras and many more. We mention also
pre-Lie algebras, Novikov algebras and LR-algebras, which we studied in a geometric context, 
see \cite{BU22}, \cite{BU24}, \cite{BU34}. In each case there, the subclass of
commutative algebras therein automatically satisfies associativity. In particular, the 
classification of such algebras includes the classification of commutative associative algebras
as a special case. See for example the classification of Novikov algebras in low 
dimension \cite{BU42}. \\
Here we want to study another class of commutative non-associative algebras. They satisfy
the Jacobi identity instead of associativity. It turns out that they are a special class 
of Jordan nilalgebras. For this reason we want to call them {\it Jacobi-Jordan algebras}. 
These algebras are also related to Bernstein-Jordan algebras.
Although they seem to be similar to Lie algebras at first sight, these algebras are quite different. \\
We are grateful to Manfred Hartl, who asked what is known about commutative algebras sa\-tisfying the
Jacobi identity. We are also grateful to Ivan Shestakov for answering
some questions on the subject, and paying our attention to
Bernstein-Jacobi algebras.

\section{Jacobi-Jordan algebras}

Let $A$ be a finite-dimensional algebra over a field $K$. We will always assume that the  
characteristic of $K$ is different from $2$ or $3$, although some of our results also hold
for characteristic $2$ or $3$.  
We denote the algebra product by $x\cdot y$. The principal powers of an element $x\in A$
are defined recursively by $x^1=x$ and $x^{i+1}=x\cdot x^i$. Also, let $A^1=A$ and define
$A^k=\sum_{i+j=k}A^iA^j$ recursively for all integers $k\ge 2$. We say that $A$ is {\it nilpotent},
if there is a positive integer $k$ with $A^k=(0)$. The algebra $A$ is called a {\it nilalgebra} of nilindex $n$, 
if the subalgebra generated by any given $a\in A$ is nilpotent, i.e., if $y^n=0$ for all $y\in A$, and there 
exists an element $x\in A$ with $x^{n-1}\neq 0$. If an algebra $A$ is a commutative nilalgebra of nilindex $n\le 2$, 
then all products vanish because of $x\cdot y=\frac{1}{2}((x+y)^2-x^2-y^2))=0$ for all $x,y\in A$.

\begin{defi}\label{jj}
An algebra $A$ over a field $K$ is called a {\it Jacobi-Jordan} algebra if it
satisfies the following two identities,
\begin{align}
x\cdot y -y\cdot x & = 0 \label{1}\\
x\cdot (y\cdot z)+y\cdot (z\cdot x)+z\cdot (x\cdot y) & = 0 \label{2} 
\end{align}
for all $x,y,z\in A$.
\end{defi}

In other words, Jacobi-Jordan algebras are commutative algebras which satisfy the
Jacobi identity. In the case of Lie algebras, the product is not commutative but anti-commutative.
Then the Jacobi identity has equivalent formulations which are not necessarily equivalent in
the commutative case. Obviously the Jacobi identity implies that $x^3=0$ for all $x\in A$, since $3\neq 0$. 
Hence a Jacobi-Jordan algebra is a nilalgebra. Since the algebra is commutative, we have
$x^3=x\cdot (x\cdot x)=(x\cdot x)\cdot x$. \\
Natural examples of Jacobi-Jordan algebras are barideals of Bernstein-Jordan algebras, which 
are special instances of train algebras of rank $3$ (see \cite{WOE}, \cite{ZIT} and the references therein.)
However it is not known whether every Jacobi-Jordan algebra arises as a barideal of
some Bernstein-Jordan algebra, see \cite{ZIT}, section $3$. \\
The name of Jordan in the definition of a Jacobi-Jordan algebra is justified by the 
following observation.

\begin{lem}\label{2.2}
A Jacobi-Jordan algebra is a commutative nilalgebra and a Jordan algebra.
\end{lem}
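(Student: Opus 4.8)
The plan is to show two things separately: that a Jacobi-Jordan algebra $A$ is a commutative nilalgebra, and that it satisfies the Jordan identity. The first assertion is essentially already established in the discussion preceding the lemma: commutativity is identity~\eqref{1}, and setting $y=z=x$ in the Jacobi identity~\eqref{2} gives $3x\cdot(x\cdot x)=0$, hence $x^3=0$ since the characteristic is not $3$. This makes the subalgebra generated by any single $a\in A$ nilpotent, so $A$ is a commutative nilalgebra. The genuine content of the lemma is the Jordan identity.

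\emph{Recall} that the Jordan identity states $(x^2\cdot y)\cdot x = x^2\cdot(y\cdot x)$ for all $x,y\in A$, where $x^2=x\cdot x$. The approach I would take is to derive this directly from~\eqref{2} using commutativity, so I first rewrite~\eqref{2} in terms of the product to have a clean working identity. Specializing is the key technique: I would substitute particular elements (and in some steps sums of elements, using that $A$ is commutative so multiplication is symmetric) into the Jacobi identity to manufacture the two terms appearing in the Jordan identity. A natural first move is to set $z=x$ and leave $y$ free in~\eqref{2}, giving $x\cdot(y\cdot x)+y\cdot(x\cdot x)+x\cdot(x\cdot y)=0$, which by commutativity reads $2\,x\cdot(x\cdot y)+y\cdot x^2=0$, i.e.\ $x\cdot(x\cdot y)=-\tfrac12\,x^2\cdot y$ (using char $\neq 2$). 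This already relates the "double left multiplication by $x$'' to multiplication by $x^2$, and I expect the Jordan identity to fall out by applying this relation twice and comparing, or by substituting $x\mapsto x$, $y\mapsto x\cdot y$ appropriately.

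Concretely, the plan is to compute $(x^2\cdot y)\cdot x$ and $x^2\cdot(y\cdot x)$ and show both equal the same multiple of $x\cdot(x\cdot(x\cdot y))$ or an equivalent expression, reducing everything via the relation $x^2\cdot y=-2\,x\cdot(x\cdot y)$ derived above. Using commutativity freely to move factors, one should be able to express $(x^2\cdot y)\cdot x=x\cdot(x^2\cdot y)=-2\,x\cdot\bigl(x\cdot(x\cdot y)\bigr)$ and similarly rewrite $x^2\cdot(y\cdot x)$, and then check the two sides coincide, possibly invoking~\eqref{2} once more on the innermost triple product. The step I expect to be the main obstacle is keeping the bookkeeping of the specializations correct: the subtlety is that $x^2\cdot y$ and $x\cdot(x\cdot y)$ are \emph{not} equal but differ by a factor, so each reassociation must be tracked with its coefficient, and one must verify that the remaining discrepancy is exactly a scalar multiple of an instance of the Jacobi identity~\eqref{2} (hence zero), rather than vanishing for a trivial reason. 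Once the linearized identity closes up, the Jordan identity follows, and the lemma is proved.
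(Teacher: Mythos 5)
Your proposal is correct and follows essentially the same route as the paper: both derive $x^2\cdot y+2x\cdot(x\cdot y)=0$ by setting $z=x$ in the Jacobi identity, then apply this relation twice (once with $y$ replaced by $x\cdot y$) to show that $(x^2\cdot y)\cdot x$ and $x^2\cdot(y\cdot x)$ both equal $-2\,x\cdot\bigl(x\cdot(x\cdot y)\bigr)$. The extra invocation of the Jacobi identity on the innermost triple product that you anticipate is not needed; the two reductions already close up.
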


\begin{proof}
Of course we have $x^3=0$ for all $x\in A$ by \eqref{2} if $3\neq 0$. Furthermore by the Jacobi  
identity with $z=x$ we obtain
\begin{align*}
0 & = x^2\cdot y + 2x\cdot (x\cdot y) 
\end{align*}
for all $x,y\in A$. Replacing $y$ by $x\cdot y$ yields

\begin{align*}
x^2\cdot (y\cdot x) & = x^2\cdot (x\cdot y) = -2x\cdot (x\cdot (x\cdot y)) \\
        & = x\cdot (-2x\cdot (x\cdot y)) = x\cdot (x^2\cdot y) \\
        & = (x^2\cdot y)\cdot x 
\end{align*}
for all $x,y\in A$. Hence $A$ is a Jordan algebra. 
\end{proof}

Note that a Jacobi-Jordan algebra is a Jordan algebra in any characteristic. However,
the statement concerning nilalgebra need not be not true in characteristic $3$. For example, 
the $1$-dimensional algebra with basis $e$ and product $e\cdot e=e$ in characteristic $3$ is commutative 
and satisfies the Jacobi identity. 
However, it is not a nilalgebra.

\begin{rem}\label{2.6}
Any Jacobi-Jordan algebra is power-associative, i.e., the subalgebra generated by any element 
is associative. This follows from the fact that a Jacobi-Jordan algebra is a Jordan algebra, see lemma $\ref{2.2}$,
and any Jordan algebra is power-associative, see  \cite{SCH}.
\end{rem}

\begin{lem}
The class of Jacobi-Jordan algebras and commutative nilalgebras of index at most three coincides.
\end{lem}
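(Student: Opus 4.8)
The plan is to prove the two inclusions separately, since the claim asserts that two classes of algebras coincide. One direction is essentially already done, and the other requires a short computation.

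The plan is to prove the two inclusions separately, since the claim asserts that two classes of algebras coincide. One direction is essentially already established, while the other requires a short polarization argument.

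First I would treat the inclusion of Jacobi-Jordan algebras into commutative nilalgebras of index at most three. Commutativity is part of Definition \ref{jj}, and we have already observed that $x^3=0$ for all $x\in A$ follows from \eqref{2} because $3\neq 0$. Combined with power-associativity (Remark \ref{2.6}), this shows that the subalgebra generated by any $x$ equals $\s\{x,x^2\}$, since $x\cdot x^2=x^3=0$ and $x^2\cdot x^2=x^4=0$; hence it is nilpotent, and the nilindex of $A$ is at most $3$. So every Jacobi-Jordan algebra is a commutative nilalgebra of index at most three.

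For the converse I would start from a commutative algebra $A$ with $y^3=0$ for all $y\in A$, which is exactly what the condition nilindex $\le 3$ provides, and recover the Jacobi identity \eqref{2}. By commutativity the cubic map $f(x)=x\cdot x^2$ is unambiguous, as $x\cdot x^2=x^2\cdot x$, and the idea is to polarize the identity $f\equiv 0$. Writing $J(x,y,z)=x\cdot(y\cdot z)+y\cdot(z\cdot x)+z\cdot(x\cdot y)$ for the left-hand side of \eqref{2}, I would expand $(x+y+z)\cdot(x+y+z)^2$ and group the terms according to how many distinct variables they involve. This yields the inclusion--exclusion identity
\begin{align*}
2\,J(x,y,z)=f(x+y+z)-f(x+y)-f(y+z)-f(z+x)+f(x)+f(y)+f(z).
\end{align*}
Since $f$ vanishes identically the right-hand side is $0$, and because $\mathrm{char}\,K\neq 2$ we conclude $J(x,y,z)=0$; that is, $A$ satisfies \eqref{2} and is a Jacobi-Jordan algebra.

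The calculation has essentially no obstacle once the bookkeeping is organized: the two genuinely checkable points are that the fully mixed part of $(x+y+z)^3$ contributes exactly $2\,J(x,y,z)$ (the factor $2$ coming from the doubled cross terms $2\,x\cdot y$ in the square), while the two-variable contributions reassemble precisely as $f(x+y)-f(x)-f(y)$ and its cyclic analogues. It is worth noting that this direction uses only $\mathrm{char}\,K\neq 2$; the hypothesis $\mathrm{char}\,K\neq 3$ is needed solely in the opposite inclusion, to deduce $x^3=0$ from \eqref{2}. Finally, no power-associativity is assumed a priori in the converse---it follows afterwards from Lemma \ref{2.2} and Remark \ref{2.6} once the Jacobi identity is in hand.
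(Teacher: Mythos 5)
Your proof is correct and takes essentially the same route as the paper: one inclusion is immediate from $x^3=0$ (which needs $\mathrm{char}\,K\neq 3$), and the converse is obtained by linearizing $x^3=0$ to recover the Jacobi identity. You merely carry out the linearization in a single three-variable inclusion--exclusion step (the paper does it via the partial linearization $x^2\cdot y+2(x\cdot y)\cdot x=0$) and you are more explicit about where $\mathrm{char}\,K\neq 2$ versus $\mathrm{char}\,K\neq 3$ is used; both the polarization identity and the bookkeeping check out.
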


\begin{proof}
Any Jacobi-Jordan algebra is a commutative nilalgebra of index at most three, because we have $x^3=0$
for all $x\in A$. Conversely, linearizing $x^3=0$ yields $x^2\cdot y+2(x\cdot y)\cdot x=0$ and the
Jacobi identity.
\end{proof}

\begin{lem}
A Jacobi-Jordan algebra is nilpotent.
\end{lem}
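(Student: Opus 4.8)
The plan is to show that the associative algebra of multiplications $M(A)\subseteq\End(A)$ generated by the operators $L_x\colon y\mapsto x\cdot y$ is nilpotent; for a commutative algebra this is equivalent to the existence of a $k$ with $A^k=(0)$, so it yields the assertion. The first, purely computational, step is to check that every single operator $L_x$ is nilpotent, in fact that $L_x^4=0$. Indeed, putting $z=x$ in the Jacobi identity \eqref{2} and using commutativity gives $x^2\cdot y=-2\,x\cdot(x\cdot y)$, that is $L_{x^2}=-2L_x^2$. Applying this same identity to $x^2$ in place of $x$ yields $L_{(x^2)^2}=-2L_{x^2}^2$. Since $A$ is power-associative by Remark \ref{2.6} and $x^3=0$, we get $x^4=x\cdot x^3=0$ and $(x^2)^2=x^4=0$, hence $L_{(x^2)^2}=0$ and therefore $L_{x^2}^2=0$. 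Consequently $L_x^4=(L_x^2)^2=\frac14 L_{x^2}^2=0$, so $M(A)$ is generated by nilpotent operators.

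The second, and harder, step is to deduce that $M(A)$, and thus $A$, is genuinely nilpotent, and I expect this to be the main obstacle: pointwise nilpotency of the generators of an associative operator algebra does not by itself force nilpotency, since products of the $L_x$ need not remain nilpotent. To close this gap one must exploit the Jordan structure established in Lemma \ref{2.2}. The cleanest route is the structure theory of Jordan algebras. By Lemma \ref{2.2}, $A$ is a finite-dimensional Jordan algebra that is a nilalgebra; in particular it has no nonzero idempotent, since an idempotent $e$ would satisfy $e=e^3=0$. Hence $A$ coincides with its radical, which is nilpotent by the classical structure theory of finite-dimensional Jordan algebras (see \cite{SCH}), and this gives the result.

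A fully self-contained argument would have to reprove that a finite-dimensional Jordan nilalgebra is nilpotent, typically by an induction on $\dim A$ that first establishes solvability (splitting off a maximal ideal containing $A\cdot A$) and then uses the Jordan identity to upgrade solvability to nilpotency. Since this is precisely the content of the classical theory and is not short, I would lean on \cite{SCH} for it rather than reproduce it; the elementary identity $L_x^4=0$ above is the only ingredient genuinely special to the Jacobi identity.
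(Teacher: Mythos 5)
Your proof is correct and ultimately coincides with the paper's: both reduce the statement, via Lemma \ref{2.2}, to Albert's theorem that a finite-dimensional Jordan nilalgebra over a field of characteristic different from $2$ is nilpotent, citing \cite{SCH}. The preliminary computation that $L_x^4=0$ is valid but, as you yourself observe, does not by itself close the gap and plays no role in the final argument, so it can be omitted.
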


\begin{proof}
Albert has shown that any finite-dimensional Jordan nil\-algebra of
characteristic different from $2$ is nilpotent, see \cite{SCH}. This implies that any 
Jacobi-Jordan algebra is nilpotent.
\end{proof}

\begin{rem}
In general, commutative nilalgebras need not be nilpotent. Even commutative power-associative
nilalgebras need not be nilpotent, although this has been conjectured by Albert in $1948$.
Suttles gave the following counterexample of a $5$-dimensional commutative power-associative
nilalgebra with basis $e_1,\ldots ,e_5$ and products
\[
e_1\cdot e_2=e_2\cdot e_4=-e_1\cdot e_5=e_3,\quad  e_1\cdot e_3=e_4, \; e_2\cdot e_3=e_5.
\]
It is solvable, has nilindex $4$, but is not nilpotent, see also the discussion in \cite{GEM}. 
The conjecture was reformulated, in the sense of Albert, namely that every commutative power-associative 
nilalgebra is solvable.
\end{rem}

\section{Classification in dimension at most 4}

There are several classification results of non-associative algebras in low dimensions, which
yield, as a special case, a list of isomorphism classes of Jacobi-Jordan algebras. We assume that
$K$ is an {\it algebraically closed field} of characteristic different from $2$ and $3$, if not said otherwise.
However, in our case of Jacobi-Jordan algebras, it is enough to look at the classification lists 
of commutative, {\it associative} algebras of dimension $n\le 4$. The reason is as follows:

\begin{prop}
A Jacobi-Jordan algebra $A$ of dimension $n\le 4$ is associative, and $A^3=0$.
\end{prop}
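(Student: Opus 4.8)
The plan is to prove both claims — associativity and $A^3=0$ — by establishing the stronger fact $A^3 = 0$ first, since associativity follows from it almost immediately. Indeed, if $A^3=0$ then every triple product $x\cdot(y\cdot z)$ lies in $A^3=(0)$ and thus vanishes, so $(x\cdot y)\cdot z = 0 = x\cdot(y\cdot z)$ trivially, giving associativity. So the heart of the matter is to show $A^3=(0)$ when $\dim A \le 4$.

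To prove $A^3=(0)$ I would work with the lower central-type series $A \supseteq A^2 \supseteq A^3 \supseteq \cdots$. Since $A$ is nilpotent by the earlier lemma, this series terminates, and $A^2 \subsetneq A$ is a proper subalgebra (a nilpotent algebra cannot satisfy $A^2=A$). The key structural input is the linearized identity derived in Lemma~\ref{2.2}, namely $x^2\cdot y + 2\,x\cdot(x\cdot y)=0$, together with its full polarization $x\cdot(y\cdot z)+y\cdot(z\cdot x)+z\cdot(x\cdot y)=0$. First I would argue that $A^2$ has small dimension: the squares $x^2$ together with products $x\cdot y$ span $A^2$, and I would use the Jacobi identity to bound $\dim A^2$ in terms of $\dim A$. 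Concretely, writing $m=\dim A^2$, one has $\dim A/A^2 = n-m$ generators, and I expect a dimension count forcing $m$ to be small (likely $m\le 1$ or $m\le 2$) once $n\le 4$.

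The main technical step is a careful case analysis on $d := \dim(A/A^2)$, the minimal number of generators. If $A$ is generated by a single element $x$, then by power-associativity (Remark~\ref{2.6}) the subalgebra it generates is spanned by $x, x^2$ with $x^3=0$, so $A^3=0$ immediately. If $d\ge 2$, I would pick generators $e_1,\dots,e_d$ and expand an arbitrary element of $A^3$ as a sum of triple products of generators; using commutativity to reduce the number of distinct shapes, the Jacobi identity \eqref{2}, and the quadratic identity $x^2\cdot y=-2\,x\cdot(x\cdot y)$, I would show every such triple product lies in $A^4$. Combined with nilpotency ($A^k=0$ for large $k$) this forces $A^3=A^4=\cdots=(0)$ by downward induction along the terminating series. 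The dimension constraint $n\le 4$ enters precisely to rule out the configurations where a nonzero triple product could survive: with at most four basis vectors and $A^2$ proper, there is simply not enough room for $A^3$ to be nonzero once the Jacobi relations are imposed.

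I expect the main obstacle to be the bookkeeping in the multi-generator case, specifically showing that $A^2\cdot A=(0)$ rather than merely that it is small. The delicate point is that a priori $A^2$ could contain elements $z$ with $z\cdot e_i\ne 0$ for some generator $e_i$; ruling this out requires combining the quadratic identity applied to all generators and their linear combinations with the dimension bound, and this is where one must genuinely use $n\le 4$ (the statement is false in higher dimensions, where nilpotent Jacobi-Jordan algebras with $A^3\ne 0$ exist). I would handle this by fixing a basis adapted to the filtration $A\supseteq A^2\supseteq A^3$ and showing that the structure constants landing in $A^3$ are all forced to vanish by the linearized Jacobi identity, the remaining freedom being too constrained to allow $A^3\ne(0)$ in dimension at most four.
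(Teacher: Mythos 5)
Your reduction of associativity to $A^3=0$ is correct and worth stating explicitly (every product $x\cdot(y\cdot z)$ and $(x\cdot y)\cdot z$ lies in $A^1A^2+A^2A^1=A^3$, so both sides vanish). But the core claim, that $\dim A\le 4$ forces $A^3=0$, is never actually proved: your text repeatedly says what you \emph{would} do (``I expect a dimension count\dots'', ``I would show every such triple product lies in $A^4$\dots'', ``the structure constants \dots are all forced to vanish''), and the decisive mechanism by which the hypothesis $n\le 4$ enters is exactly the part left blank. Moreover, the particular mechanism you gesture at -- showing $A^3\subseteq A^4$ and invoking nilpotency -- cannot be the right one as stated, because in the unique non-associative five-dimensional example one has $A^3\ne 0$ but $A^4=0$; so no identity-pushing of triple products into $A^4$ can work without already having used the dimension bound, and your sketch never says how it is used. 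The argument that actually closes the gap (and is the one the paper deploys later, in dimension $5$) is: in characteristic $\ne 2$ the subspace $A^2$ is spanned by squares, so $A^3\ne 0$ yields $x,y$ with $y\cdot x^2\ne 0$; the identities $x^2\cdot y+2x\cdot(x\cdot y)=0$, $x^2\cdot(x\cdot y)=0$, $x^2\cdot(y\cdot z)+2(x\cdot y)\cdot(x\cdot z)=0$ then force $\{x,\,y,\,x^2,\,x\cdot y,\,y\cdot x^2\}$ to be linearly independent, so $\dim A\ge 5$. Without that (or an equivalent) step your proof does not go through.

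For comparison, the paper does not argue directly at all: it cites Gerstenhaber--Myung \cite{GEM}, who proved that every commutative \emph{power-associative} nilalgebra of nilindex $3$ and dimension $4$ over a field of characteristic $\ne 2$ is associative with $A^3=0$, and then invokes Remark~\ref{2.6}. A self-contained proof along your lines is certainly possible and would be a genuinely different (and more elementary, since you may use the full Jacobi identity rather than mere power-associativity) route, but it requires carrying out the linear-independence argument above rather than promising it.
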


\begin{proof}
It is proved in \cite{GEM} that any commutative power-associative nilalgebra $A$ of nilindex
$3$ and of dimension $4$ over a field $K$ of characteristic different from $2$ is associative,
and $A^3=0$. Since by lemma $\ref{2.6}$ any Jacobi-Jordan algebra is power-associative, the claim follows.
It is easy to see that the argument also works in dimension $n\le 3$, for nilindex at most $3$.
\end{proof}

On the other hand, we may also consider the list of nilpotent Jordan algebras in low dimensions,
over an arbitrary field of characteristic different from $2$ and $3$. For algebras of
dimension $n\le 2$ the classification is well-known (it coincides with the classification of
nilpotent Jordan algebras in dimension $2$). In dimension $1$ there is only one 
Jacobi-Jordan algebra, the zero algebra $A_{0,1}$. The second index number
indicates the dimension. In dimension $2$ there are exactly two algebras, the zero algebra 
$A_{0,1}\oplus A_{0,1}$, and $A_{1,2}$, given by the product $e_1\cdot e_1=e_2$. Both algebras are 
Jacobi-Jordan algebras, satisfy $A^3=0$, and hence are associative.
The following result is part of several different classification lists, e.g., of 
\cite{AFM}, \cite{FIP1}, \cite{KAS}, and \cite{MAR}.

\begin{prop}
Any Jacobi-Jordan algebra of dimension $n\le 3$ over an algebraically closed field of characteristic 
different from $2$ and $3$ is isomorphic to exactly one of the following algebras:
\vspace*{0.5cm}
\begin{center}
\begin{tabular}{c|c|c}
Algebra & Products & Dimension \\
\hline
$A_{0,1}$ & $-$ & $1$ \\
\hline
$A_{0,1}\oplus A_{0,1}$ & $-$ & $2$ \\
$A_{1,2}$ & $e_1\cdot e_1=e_2$ & $2$ \\
\hline
$A_{0,1}\oplus A_{0,1}\oplus A_{0,1}$ & $-$ & $3$ \\
$A_{1,2}\oplus A_{0,1}$  & $e_1\cdot e_1=e_2$ & $3$ \\
$A_{1,3}$ & $e_1\cdot e_1=e_2,\; e_3\cdot e_3=e_2$ & $3$ \\
\hline
\end{tabular}
\end{center}
\end{prop}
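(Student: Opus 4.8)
The plan is to reduce the classification to a problem in elementary linear algebra by exploiting the structural facts already established. By the preceding proposition, every Jacobi-Jordan algebra $A$ of dimension $n\le 3$ satisfies $A^3=(0)$ and is associative. Since $A^3=A\cdot A^2=(0)$ by commutativity, the subspace $A^2$ is contained in the annihilator, so products with $A^2$ vanish and the entire multiplication is encoded by a single symmetric bilinear map. Concretely, I would choose a complement $V$ of $A^2$ in $A$; the product then descends to a \emph{surjective} symmetric bilinear map $\mu\colon V\times V\to A^2$, surjective because $A^2=V\cdot V$. The first step is to check that two such algebras are isomorphic if and only if the associated triples $(V,A^2,\mu)$ are equivalent under the natural action of $GL(V)\times GL(A^2)$: an isomorphism must carry $A^2$ to $A^2$, and the component of the image of $V$ landing in $A^2$ is irrelevant precisely because products with $A^2$ vanish.

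Next I would run a case analysis on $d=\dim A^2$, using $\dim V=n-d$ together with surjectivity of $\mu$. Surjectivity forces $\binom{n-d+1}{2}\ge d$, which already rules out $d=n$ for $n\ge 1$ and, in dimension $n=3$, the case $d=2$ (where $\dim V=1$ yields at most one independent product, too few to span a plane). The case $d=0$ gives the zero algebra in each dimension. This leaves, for $n=1$ only $d=0$ (the algebra $A_{0,1}$); for $n=2$ the case $d=1$; and for $n=3$ the single remaining case $d=1$.

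For the surviving cases $\mu$ is a nonzero symmetric bilinear form on $V$ with values in the line $A^2\cong K$, so the core step is the classification of symmetric bilinear forms on a space of dimension $n-d\le 2$ over an algebraically closed field. Here congruence in $GL(V)$ together with scaling of the one-dimensional target reduces everything to the rank: on $V=K$ (for $n=2$) the unique nonzero form gives $A_{1,2}$ with $e_1\cdot e_1=e_2$; on $V=K^2$ (for $n=3$) a rank-one form gives $A_{1,2}\oplus A_{0,1}$ and a rank-two form gives $A_{1,3}$ with $e_1\cdot e_1=e_2,\ e_3\cdot e_3=e_2$, after the obvious renaming of basis vectors.

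Finally, to obtain the ``exactly one'' statement I would separate the listed algebras by isomorphism invariants: the dimension $n$, the number $\dim A^2$, and $\dim\Ann(A)$ where $\Ann(A)=\{x\in A:x\cdot A=(0)\}$. The only genuinely close pair is the two three-dimensional algebras with $\dim A^2=1$, and these are distinguished by $\dim\Ann(A)$, which equals $2$ for $A_{1,2}\oplus A_{0,1}$ (rank-one form) and $1$ for $A_{1,3}$ (rank-two form). I expect the main obstacle to be purely bookkeeping: verifying that algebra isomorphism coincides with $GL(V)\times GL(A^2)$-equivalence of the forms, and handling the scaling of the one-dimensional target correctly so that rank becomes a complete invariant. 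The substantive input—that $A^3=(0)$—is already supplied by the previous proposition, after which no case requires more than this elementary analysis.
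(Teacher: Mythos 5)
Your argument is correct, but it takes a genuinely different route from the paper: the paper offers no proof of this proposition at all, instead invoking the preceding result (that every Jacobi-Jordan algebra of dimension $n\le 4$ is associative with $A^3=0$) and then simply citing existing classification lists of low-dimensional commutative associative, respectively nilpotent Jordan, algebras (\cite{AFM}, \cite{FIP1}, \cite{KAS}, \cite{MAR}). You use the same structural input $A^3=0$, but then carry out the classification from scratch by encoding the multiplication as a surjective symmetric bilinear map $\mu\colon V\times V\to A^2$ on a complement $V$ of $A^2$, matching algebra isomorphism with $GL(V)\times GL(A^2)$-equivalence, and reducing to the rank classification of symmetric bilinear forms over an algebraically closed field of characteristic not $2$. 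The dimension count $\binom{n-d+1}{2}\ge d$ correctly eliminates the impossible values of $d=\dim A^2$, and your separation of $A_{1,2}\oplus A_{0,1}$ from $A_{1,3}$ via $\dim\Ann(A)$ settles the ``exactly one'' claim. What your approach buys is a self-contained, checkable proof that also makes transparent why associativity is automatic (all triple products vanish when $A^3=0$) and that would extend verbatim to classifying all Jacobi-Jordan algebras with $A^3=0$ in any dimension in terms of symmetric bilinear maps; what the paper's approach buys is brevity and consistency with the published tables it reuses in dimensions $4$ and $5$. The only point worth making explicit in a write-up is the verification that every $GL(V)\times GL(A^2)$-equivalence really does lift to an algebra isomorphism (extend $g$ on $V$ by $h$ on $A^2$), which you have flagged and which is routine.
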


\begin{rem}
The only indecomposable Jacobi-Jordan algebra in dimension $3$ is $A_{1,3}$. Note that it is
isomorphic over an algebraically closed field of characteristic not $2$ or $3$ to the 
``commutative Heisenberg algebra'', given by the products
\[
e_1\cdot e_2=e_3=e_2\cdot e_1.
\]
In general we have the commutative, associative algebras $H_m$ with basis $(x_1,\ldots ,x_m,y_1,\ldots y_m,z)$
and products $x_i\cdot y_i=y_i\cdot x_i=z$ for $i=1,\ldots ,m$.
\end{rem}

In dimension $4$ we obtain the following classification list, see Table $21$ in \cite{MAR}.

\begin{prop}
Any Jacobi-Jordan algebra of dimension $4$ over an algebraically closed field of characteristic 
different from $2$ and $3$ is isomorphic to exactly one of the following $6$ algebras:
\vspace*{0.5cm}
\begin{center}
\begin{tabular}{c|c}
Algebra & Products \\
\hline
$A_{0,1}^4$ & $-$ \\
$A_{1,2}\oplus A_{0,1}^2$ & $e_1\cdot e_1=e_2$ \\
$A_{1,3}\oplus A_{0,1}$ & $e_1\cdot e_1=e_2,\; e_3\cdot e_3=e_2$ \\
$A_{1,2}\oplus A_{1,2}$ & $e_1\cdot e_1=e_2,\; e_3\cdot e_3=e_4$ \\
\hline
$A_{1,4}$ & $e_1\cdot e_1=e_2,\; e_1\cdot e_3=e_3\cdot e_1=e_4$  \\
$A_{2,4}$ & $e_1\cdot e_1=e_2,\;e_3\cdot e_4=e_4\cdot e_3=e_2$  \\
\hline
\end{tabular}
\end{center}
\end{prop}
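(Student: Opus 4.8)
The plan is to classify all nilpotent Jacobi-Jordan algebras of dimension $4$ up to isomorphism. By the preceding proposition, such an algebra $A$ satisfies $A^3=0$, hence is associative, so I am reduced to classifying $4$-dimensional commutative associative nilalgebras with $x^3=0$. The key structural invariant is the dimension of $A^2$, and since $A^3=0$ the product is a symmetric bilinear map $A\times A\to A^2$ whose image lands in the annihilator; this makes $A^2$ both a subspace of the radical and central. I would stratify the classification according to $\dim A^2\in\{0,1,2\}$ (note $\dim A^2\le 3$ is forced, and one checks $\dim A^2=3$ cannot occur for a nilalgebra of this dimension since $A^2$ must be annihilated).

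First I would dispose of the decomposable cases. If $A^2=0$ then all products vanish and $A\cong A_{0,1}^4$. If $A$ decomposes as a direct sum of lower-dimensional Jacobi-Jordan algebras, the earlier classification in dimensions $\le 3$ gives exactly $A_{1,2}\oplus A_{0,1}^2$, $A_{1,3}\oplus A_{0,1}$, and $A_{1,2}\oplus A_{1,2}$. So the real content is isolating the indecomposable algebras. For these I would set $V=A/A^2$ and view the product as a nondegenerate (in the sense of having trivial ``kernel'' after quotienting by the annihilator) symmetric form $\beta\colon \operatorname{Sym}^2 V\to A^2$. An algebra is decomposable precisely when this form splits into an orthogonal direct sum after choosing a complement, so indecomposability amounts to a nondegeneracy condition on $\beta$.

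The heart of the argument is the case $\dim A^2=1$ and $\dim A^2=2$ with $A$ indecomposable. When $\dim A^2=1$, say $A^2=\langle e_2\rangle$, the product is encoded by a single symmetric bilinear form on the $3$-dimensional quotient $V$; over an algebraically closed field such a form is classified by its rank, and I would show the rank-$3$ case yields $A_{1,4}$ (after the Heisenberg-type normalization already noted in the Remark, pairing $e_1$ with itself and $e_3$ with $e_1$) while lower rank reduces to decomposable algebras. When $\dim A^2=2$, I would choose a basis $e_2,e_4$ of $A^2$ and analyze the two symmetric forms giving the $e_2$- and $e_4$-components; the requirement that $A^2$ be spanned by actual products together with $x^3=0$ and indecomposability should pin down $A_{2,4}$, with the normalization $e_1\cdot e_1=e_2$, $e_3\cdot e_4=e_2$ coming from simultaneous reduction of the pair of forms.

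The main obstacle I anticipate is the simultaneous normal form problem in the $\dim A^2=2$ case: unlike a single symmetric form, a pair (or pencil) of symmetric bilinear forms on a small space has a more delicate classification, and I must carefully use the constraints $x^3=0$ (equivalently the full linearized Jacobi identity) to rule out spurious families and to collapse apparently distinct structure constants onto the single normal form $A_{2,4}$. A secondary subtlety is verifying that the six algebras listed are pairwise non-isomorphic; here I would distinguish them by the invariants $\dim A^2$, $\dim\operatorname{Ann}(A)$, and the isomorphism type (rank) of the associated symmetric form on $A/A^2$, which together separate all six cases. Since this is precisely Table $21$ of \cite{MAR}, I would finally cross-check the list against that reference rather than reproduce every structure-constant computation.
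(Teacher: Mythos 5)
Your strategy is genuinely different from the paper's: after the reduction to $A^3=0$ (hence associativity) via the preceding proposition, the paper gives no computational proof at all but simply cites Table $21$ of \cite{MAR} together with the remark that the list of \cite{AFM} remains valid over any algebraically closed field of characteristic not $2$ or $3$. You instead propose a self-contained classification by encoding the algebra as a surjective symmetric bilinear map $\beta\colon (A/A^2)\times(A/A^2)\to A^2$; this is legitimate, since $A^3=0$ forces $A^2\subseteq\Ann(A)$, and conversely any such $\beta$ automatically satisfies both defining identities. The strata $(\dim A/A^2,\dim A^2)\in\{(4,0),(3,1),(2,2)\}$ (the case $\dim A^2=3$ is impossible because then $\dim A/A^2=1$ and $\dim A^2\le\dim\operatorname{Sym}^2(A/A^2)=1$) yield $1+3+2=6$ algebras, and the pencil problem you worry about in the last stratum is harmless here: $2$-dimensional spaces of binary quadratic forms fall into exactly two ${\rm GL}_2$-orbits, the secant and tangent lines to the conic of perfect squares, so no continuous families can arise.

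There is, however, a concrete error: you have attached the two indecomposable algebras to the wrong strata, so two of your intermediate claims are false as stated. From the table, $A_{1,4}$ (products $e_1\cdot e_1=e_2$, $e_1\cdot e_3=e_4$) has $\dim A^2=2$; it is the tangent pencil $\langle x^2,xy\rangle$, not the rank-$3$ form. Conversely $A_{2,4}$ (products $e_1\cdot e_1=e_2$, $e_3\cdot e_4=e_2$) has $\dim A^2=1$, with associated form $x_1^2+2x_3x_4$ of rank $3$ on the $3$-dimensional quotient. So in your $\dim A^2=1$ analysis the rank-$3$ case yields $A_{2,4}$, while the unique indecomposable algebra with $\dim A^2=2$ is $A_{1,4}$ (the secant pencil $\langle x^2,y^2\rangle$ being the decomposable $A_{1,2}\oplus A_{1,2}$). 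Once the labels are swapped the argument goes through. Note also that $\dim A^2$ and $\dim\Ann(A)$ do not separate $A_{1,4}$ from $A_{1,2}\oplus A_{1,2}$ (both invariants equal $2$ for both algebras), so for that pair you genuinely need the orbit type of the pencil, or equivalently the locus $\{x\in A: x^2=0\}$, which is a $2$-plane for $A_{1,2}\oplus A_{1,2}$ and a $3$-plane for $A_{1,4}$.
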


There are also exactly six complex Jacobi-Jordan algebras of dimension $4$.
In fact, the classification of all $4$-dimensional complex nilpotent Jordan algebras, 
listed in \cite{AFM}, remains valid for every algebraically closed field of characteristic 
different from $2$ and $3$. This is also noted in \cite{FIP2}, \cite{MAR}.

\begin{rem}
All our Jacobi-Jordan algebras of dimension at most $4$ appear as barideals of
Bernstein-Jacobi algebras, classified by Cortes and Montaner in \cite{CM}.
\end{rem}

\section{Classification in dimension 5}

In dimension $5$ there appears for the first time a Jacobi-Jordan algebra which is not
associative. The classification of nilpotent, commutative associative algebras of dimension $n=5$
over an algebraically closed field can be found in Poonen's article \cite{POO}. For
characteristic $\neq 2, 3$ there are $25$ algebras, corresponding to
the list of local algebras in dimension $6$, see table $1$ in \cite{POO}. If we ``forget'' 
the unit element, we obtain the list of $5$-dimensional algebras without unit.
The classification list of $5$-dimensional associative Jacobi-Jordan algebras is a sublist of these
$25$ algebras, consisting of $14$ algebras. We first show that there is only one 
Jacobi-Jordan algebra in dimension $5$ which is {\it not} associative.

\begin{prop}
Any Jacobi-Jordan algebra of dimension $5$ over an algebraically closed field of characteristic
different from $2$ and $3$ is associative or isomorphic to the following algebra, given by
the products of the basis $\{e_1,\ldots ,e_5 \}$
as follows:
\begin{align*}
e_1\cdot e_1 & = e_2,\; e_1\cdot e_4=e_4\cdot e_1=e_5,\\
e_1\cdot e_5 & = e_5\cdot e_1 =-\frac{1}{2}e_3,\; e_2\cdot e_4=e_4\cdot e_2=e_3.
\end{align*}
This algebra is not associative. It is, up to isomorphism, the unique nilpotent Jordan algebra 
$A$ of nilindex $3$ with $A^3\neq 0$.
\end{prop}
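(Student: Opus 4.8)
The plan is to reduce the statement to a classification of the $5$-dimensional Jacobi-Jordan algebras $A$ with $A^3\neq 0$. Indeed, if $A^3=0$ then every product of three elements vanishes, so both sides of the associative law are $0$ and $A$ is associative; conversely, if $A$ is associative then fully polarizing $x^3=0$ (legitimate since $\operatorname{char}K\neq 2,3$) yields $6\,x\cdot(y\cdot z)=0$, hence $A^3=0$. Thus, for Jacobi-Jordan algebras, ``associative'' is equivalent to ``$A^3=0$'', and it remains to prove that up to isomorphism there is exactly one $A$ with $A^3\neq 0$, the displayed one, and that it is non-associative. I would also record that such an $A$ is indecomposable: in a direct sum the cube splits accordingly, and by the results of the previous sections every Jacobi-Jordan algebra of dimension $\le 4$ has vanishing cube, so a summand with nonzero cube must already be $5$-dimensional.

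The heart of the argument is an analysis of the left multiplications $L_x\colon y\mapsto x\cdot y$, which are nilpotent since $A$ is nilpotent. Putting $z=x$ in \eqref{2} gives the operator identity $L_{x^2}=-2L_x^2$, while \eqref{2} together with $x^3=0$ gives $\{L_x,L_{x^2}\}=0$, whence $L_x^3=0$ for all $x$. If $L_x^2=0$ for every $x$, then $x^2\cdot A=0$, so (as $A^2$ is spanned by squares) $A^2\subseteq\Ann(A)$ and $A^3=0$; hence some $e_1$ has $L_{e_1}^2\neq 0$, and then $e_2:=e_1^2\neq 0$ as well, because $e_1^2=0$ would force $L_{e_1}^2=-\tfrac12 L_{e_1^2}=0$. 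Writing $L:=L_{e_1}$, we obtain a nilpotent operator on a $5$-dimensional space with $L^2\neq 0$ and $L^3=0$; as two Jordan blocks of size $3$ would need dimension $\ge 6$, there is a single size-$3$ block and $\im(L^2)$ is one-dimensional.

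I would next establish the key structural fact $e_2\notin\im(L^2)$. Choose any $e_4$ with $L^2e_4\neq 0$; then $L^2e_4$ spans $\im(L^2)$, and from $L^2=-\tfrac12 L_{e_2}$ we get $L^2e_4=-\tfrac12\,e_2\cdot e_4$. If $e_2$ were in $\im(L^2)=\langle L^2e_4\rangle$, then $L^2e_4=\mu^{-1}e_2$ for some $\mu\neq 0$, giving $e_2\cdot e_4=-2\mu^{-1}e_2\neq 0$; thus $e_2$ would be an eigenvector of $L_{e_4}$ with nonzero eigenvalue, contradicting the nilpotency of $L_{e_4}$. Hence $e_2\notin\im(L^2)$; equivalently, the $3+1+1$ shape is excluded and $L$ has Jordan type $3+2$, with $e_1$ spanning the length-$2$ string $e_1\mapsto e_2\mapsto 0$. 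Setting $e_5:=e_1\cdot e_4$ and $e_3:=e_2\cdot e_4=-2\,e_1\cdot e_5$ (again by $L^2=-\tfrac12 L_{e_2}$), a short argument with the two Jordan strings and $e_2\notin\langle e_3\rangle$ shows that $e_1,e_2,e_3,e_4,e_5$ are linearly independent and hence a basis.

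Finally I would determine the remaining products. By construction $e_1\cdot e_1=e_2$, $e_1\cdot e_4=e_5$, $e_1\cdot e_5=-\tfrac12 e_3$, $e_2\cdot e_4=e_3$, while $e_1\cdot e_2=e_1^3=0$, $e_1\cdot e_3=L^3e_4=0$, and $e_2\cdot e_2=L_{e_2}e_2=-2L^2e_2=0$. All outstanding products lie in $A^2$, and using \eqref{2}, the two operator identities, a final adjustment of $e_4$ by an element of $A^2$, and a rescaling, one checks they all vanish; in particular $A^2=\langle e_2,e_3,e_5\rangle$, $A^3=\langle e_3\rangle$ and $A^4=0$. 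This exhibits the isomorphism with the displayed algebra, whose non-associativity is immediate from $(e_1\cdot e_1)\cdot e_4=e_3\neq-\tfrac12 e_3=e_1\cdot(e_1\cdot e_4)$. I expect the main obstacle to be exactly this last step: the degenerate shape is ruled out cleanly by the eigenvector/nilpotency observation, but verifying that \emph{every} remaining structure constant is forced to zero (equivalently $A^4=0$ and the exact normal form) requires a careful, if routine, pass through the Jacobi identity and an explicit normalization of the generator $e_4$.
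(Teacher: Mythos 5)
Your strategy is sound and is genuinely more self-contained than the paper's. The paper reduces to the case $A^3\neq 0$ exactly as you do, but then outsources the heavy lifting to Elgueta--Suazo \cite{ELS}: it quotes their identities to obtain the basis $\{y,x,y\cdot x^2,x^2,y\cdot x\}$, their Lemma 1.4 for $\dim A^2\le 3$, and their computation $4\be=-\ga^2$ for the coefficients in $y^2=\al\, y\cdot x^2+\be\, x^2+\ga\, y\cdot x$, before writing down the change of basis. Your operator-theoretic route --- $L_{x^2}=-2L_x^2$, the anticommutator relation giving $L_x^3=0$, the Jordan-type analysis of $L_{e_1}$, and especially the eigenvector argument showing $e_2\notin\im(L_{e_1}^2)$ and hence forcing the $3+2$ shape --- replaces that citation by elementary linear algebra and is correct as far as it goes. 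You should, however, justify $A^2=\langle e_2,e_3,e_5\rangle$ before asserting that the outstanding products lie there: since $A$ is generated by $e_1,e_4$ and is nilpotent, $3\le\dim A^2\le 4$, and $\dim A^2=4$ would make $A$ one-generated, hence of dimension at most $2$.

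The one concrete flaw sits exactly in the step you flag. Writing $e_4^2=\al e_3+\be e_2+\ga e_5$, the identities give for instance $e_4\cdot e_5=-\tfrac12\,e_4^2\cdot e_1=\tfrac{\ga}{4}e_3$, so when $\ga\neq 0$ the products $e_4\cdot e_4$ and $e_4\cdot e_5$ do \emph{not} vanish; and no adjustment of $e_4$ by an element of $A^2$ can change $\ga$, because $e_4\cdot e_2=e_3$, $e_4\cdot e_5=\tfrac{\ga}{4}e_3$, $e_4\cdot e_3=0$ and all products of two elements of $A^2$ contribute nothing to the $e_5$-component of the square. The correct normalization, as in the paper, is $e_4\mapsto e_4-\tfrac12\al e_2-\tfrac12\ga e_1$ --- you must also subtract a multiple of $e_1$, which is not in $A^2$ --- together with the compensating replacement $e_5\mapsto e_1\cdot e_4-\tfrac12\ga e_2$ (here one uses $4\be=-\ga^2$ to see that the new $e_4$ squares to zero). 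With that repair the remaining products all vanish and your argument closes.
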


\begin{proof}
Let $A$ be a $5$-dimensional Jacobi-Jordan algebra. If $A^3=0$, then $A$ is associative.
Now assume that $A^3\neq 0$. Then $A$ has nilindex $3$, because otherwise $A^2=0$. Hence we can 
use the arguments given in \cite{ELS} for the case of Jordan 
nilalgebras of nilindex $3$. There is only one such algebra up to isomorphism.
There exists $x,y\in A$ such that $y\cdot x^2\neq 0$. Using the identities 
\begin{align*}
x^2\cdot y+2(x\cdot y)\cdot x & = 0, \\
x^2\cdot (x\cdot y) & = 0,\\
x^2\cdot (y\cdot z)+2(x\cdot y)\cdot(x\cdot z) & = 0,\\
2(x\cdot y)^2+x^2\cdot y^2 & = 0,
\end{align*}
we see that $\{y,x,y\cdot x^2,x^2,y\cdot x\}$ is a basis of $A$. Since $\dim (A^2)\le 3$ by Lemma $1.4$
of \cite{ELS}, we have $A^2={\rm span}\{y\cdot x^2,x^2,y\cdot x \}$. Since $y^2\in A^2$ there exist
$\al,\be,\ga \in K$ such that 
\[
y^2=\al y\cdot x^2+\be x^2+\gamma y\cdot x.
\]
A direct computation using the above identities shows that $4\be=-\ga^2$, see \cite{ELS}. By setting
\[
e_1=x,\; e_2=x^2, \; e_3=y\cdot x^2,\; e_4=y-\frac{1}{2}\al x^2-\frac{1}{2}\ga x, \; 
e_5=y\cdot x-\frac{1}{2}\ga x^2
\]
we see that $\{e_1,\ldots ,e_5\}$ is a basis of $A$ with the above products. This algebra 
is a Jordan-Jacobi algebra which is not associative, because $(e_1\cdot e_1)\cdot e_4=e_3$ and
$e_1\cdot (e_1\cdot e_4)=-\frac{1}{2}e_3$.
\end{proof}

We will explain now by example how to read Poonen's classification list \cite{POO}, enabling us to
obtain a list of $5$-dimensional associative Jacobi-Jordan algebras. Isomorphism types of such algebras
are in bijection with isomorphism types of local commutative associative algebras with $1$, of rank $6$ and of
nilindex $\le 3$.

\begin{ex}
The local unital commutative associative algebra of rank $6$ in Table $1$ of \cite{POO}, given by
$K[x,y]/I$ with ideal $I=(xy,yz,z^2,y^2-xz,x^3)$ yields the $5$-dimensional associative Jacobi-Jordan
algebra given by the products
\[
e_1\cdot e_1=e_2,\; e_1\cdot e_4=e_4\cdot e_1=e_5,\; e_3\cdot e_3=e_5.
\]
\end{ex}

To see this we compute a Groebner basis for the ideal $I$ in $K[x,y]$, with respect to the
standard lexicographical order: $\CG=\{x^3,xy,xz-y^2,y^3,yz,z^2 \}$. Therefore we obtain a basis
for the quotient $K[x,y]/I$ by $\CB=\{x,x^2,y,z,y^2 \}=\{e_1,\ldots ,e_5 \}$. We use here, that the 
monomials which are not divisible by a leading monomial from $\CG$ form a basis of the quotient.
Now we compute the products $e_i\cdot e_j$. We have $e_1\cdot e_1=x^2=e_2$, $e_1\cdot e_2=x^3=0$,
$e_1\cdot e_3=xy=0$, $e_1\cdot e_4=xz=y^2=e_3$, $e_1\cdot e_5=xy^2=0$, and so forth. 

\begin{prop}
Any indecomposable associative Jacobi-Jordan algebra of dimension $5$ over an algebraically closed 
field of characteristic not $2$ and $3$ is isomorphic to exactly one of the following $7$ algebras:
\vspace*{0.5cm}
\begin{center}
\begin{tabular}{c|c|c}
Algebra & Products & Ideal $I$ in \cite{POO}\\
\hline
$A_{1,5}$ & $e_1\cdot e_1=e_2,\; e_1\cdot e_3=e_3\cdot e_1=e_5,\; e_3\cdot e_3=e_4$ & $(x,y)^3$ \\
$A_{2,5}$ & $e_1\cdot e_1=e_2,\; e_1\cdot e_4=e_4\cdot e_1=e_5,\; e_3\cdot e_3=e_5$ & $(xy,yz,z^2,y^2-xz,x^3)$\\
$A_{3,5}$ & $e_1\cdot e_1=e_2,\; e_1\cdot e_4=e_4\cdot e_1=e_5,$ & $(xy,z^2,xz-yz,x^2+y^2-xz)$ \\
         & $e_3\cdot e_3=-e_2+e_5, \; e_3\cdot e_4=e_4\cdot e_3=e_5 $ & \\
$A_{4,5}$ & $e_1\cdot e_1=e_2,\; e_3\cdot e_3=e_4,\; e_5\cdot e_5=-e_2+e_4$ & $(xy,xz,yz,x^2+y^2-z^2)$ \\
$A_{5,5}$ & $e_1\cdot e_1=e_2,\; e_3\cdot e_3=e_4,\; e_3\cdot e_5=e_5\cdot e_3=-e_2+e_4$ & $(x^2,xy,yz,xz+y^2-z^2)$ \\
$A_{6,5}$ & $e_1\cdot e_1=e_2,\;e_1\cdot e_3=e_3\cdot e_1=e_5,$  & $(x^2,xy,y^2,z^2)$\\
         & $e_1\cdot e_4=e_4\cdot e_1 =\frac{1}{2}e_2$ & \\
$A_{7,5}$ & $e_1\cdot e_1=e_2,\;e_3\cdot e_3=-e_2,$  & $(x^2,y^2,z^2,w^2,xy-zw,xz,$ \\
         & $e_4\cdot e_5=e_5\cdot e_4=\frac{1}{2}e_2$ & $xw,yz,yw)$\\
\hline
\end{tabular}
\end{center}
\end{prop}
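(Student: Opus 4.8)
The plan is to reduce the statement to Poonen's classification \cite{POO} by means of the unit-adjunction correspondence, and then, for each relevant ideal, to carry out the same Groebner-basis computation that is illustrated in the example above. First I would make the correspondence precise. If $A$ is a $5$-dimensional commutative associative Jacobi-Jordan algebra, then $A$ is a nilalgebra, so its unitalization $\tilde A = K\cdot 1\oplus A$ is a local commutative associative algebra of rank $6$ whose maximal ideal is $\Lm = A$. Conversely, every local rank-$6$ commutative associative algebra arises this way by forgetting the unit, and since any isomorphism of such unital local algebras must fix $1$ and preserve the nilradical $\Lm$, the passage $A\mapsto \tilde A$ is a bijection on isomorphism classes. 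Thus completeness and pairwise non-isomorphism of our list will be inherited directly from Poonen's list, and the only remaining tasks are to single out the correct subfamily and to compute the products.

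Next I would translate the Jacobi-Jordan condition into a condition on $\Lm$. Since $A$ is commutative associative and $x^3=0$ for all $x\in A$, polarizing the cubic $x\mapsto x^3$ twice (permissible because the characteristic is different from $2$ and $3$) gives $xyz=0$ for all $x,y,z\in A$, that is $\Lm^3=0$; conversely $\Lm^3=0$ forces $x^3=0$. Hence the associative Jacobi-Jordan algebras of dimension $5$ correspond exactly to those of Poonen's $25$ local algebras $K[x,y,\dots]/I$ whose maximal ideal satisfies $\Lm^3=0$, equivalently whose defining ideal $I$ contains the cube $(x,y,\dots)^3$ of the irrelevant ideal. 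Running through Table $1$ of \cite{POO} with this single criterion isolates the $14$ relevant algebras.

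I would then separate the indecomposable ones. Note first that decomposability of $A$ is invisible at the level of $\tilde A$, which is local in every case; what matters is whether $A=\Lm$ splits as a direct sum of ideals. The decomposable members are precisely the direct sums of the lower-dimensional indecomposable Jacobi-Jordan algebras $A_{1,2},A_{1,3},A_{1,4},A_{2,4}$ (padded with copies of $A_{0,1}$) that add up to dimension $5$; there are exactly $7$ of these, leaving $7$ indecomposable algebras. For each of the remaining $7$ ideals $I$ I would repeat the computation of the preceding example: compute a Groebner basis $\CG$ of $I$ for the lexicographic order, read off the standard-monomial basis $\CB=\{e_1,\dots,e_5\}$ of $K[x,y,\dots]/I$, and tabulate the products $e_i\cdot e_j$. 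A final linear change of basis then matches each multiplication table with the normal form and the ideal recorded in the table.

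The main obstacle is less any single step than the uniform bookkeeping: one must verify the criterion $\Lm^3=0$ and check indecomposability across all of Poonen's list without miscounting, and then perform seven Groebner-basis reductions and seven normalizing base changes correctly. The genuinely delicate point is the normalization identifying each computed table with the listed form. The product of such an algebra is governed by the symmetric bilinear multiplication map on $\Lm/\Lm^2$ with values in $\Lm^2$, so the non-monomial relations in $I$ (such as $y^2-xz$ for $A_{2,5}$ or $x^2+y^2-z^2$ for $A_{4,5}$) mix the quadratic part and must be diagonalized; this is where the coefficients like $\frac{1}{2}$ appearing in $A_{6,5}$ and $A_{7,5}$ enter, and where algebraic closedness of $K$ is used to bring each quadratic form into a standard shape.
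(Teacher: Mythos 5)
Your proposal follows essentially the same route as the paper: the paper likewise reduces the problem to Poonen's list of rank-$6$ local commutative associative algebras of nilindex at most $3$, illustrates the unit-forgetting and Groebner-basis computation on the ideal $(xy,yz,z^2,y^2-xz,x^3)$, and then records the resulting $14$ algebras split into $7$ indecomposable and $7$ decomposable ones. Your write-up merely makes explicit the steps the paper leaves implicit (the bijection on isomorphism classes via unitalization, the equivalence of nilindex $\le 3$ with $\Lm^3=0$ by polarization, and the count of decomposables), and is correct.
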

\vspace*{0.5cm}
For decomposable algebras we have also identified the associative Jacobi-Jordan algebras among
Poonen's list in table $1$ of \cite{POO}. The result is as follows.

\begin{prop}
Any decomposable associative Jacobi-Jordan algebra of dimension $5$ over an algebraically closed 
field of characteristic not $2$ and $3$ is isomorphic to exactly one of the following $7$ algebras:
\vspace*{0.5cm}
\begin{center}
\begin{tabular}{c|c|c}
Algebra & Products & Ideal $I$ in \cite{POO}\\
\hline
$A_{0,1}^5$ & $-$ & $(x,y,z,w,v)^2$ \\
$A_{1,2}\oplus A_{0,1}^3$ & $e_1\cdot e_1=e_2$ & $(x^2,y^2,z^2,xy,xz,xw,yz,yw,zw,w^3)$\\
$A_{1,3}\oplus A_{0,1}^2$ & $e_1\cdot e_1=e_2,\; e_3\cdot e_3=e_2$ & $(x^2,y^2,z^2,w^2,xy,xz,xw,yz,yw)$ \\
$A_{2,4}\oplus A_{0,1}$ & $e_1\cdot e_1=e_2,\; e_3\cdot e_4=e_4\cdot e_3=e_2$ & $(x^2,y^2+zw,z^2,w^2,xy,xz,xw,yz,yw)$ \\
$A_{1,2}^2\oplus A_{0,1}$ & $e_1\cdot e_1=e_2,\; e_3\cdot e_3=e_4$ & $(x^2,xy,xz,yz,y^3,z^3)$ \\
$A_{1,2}\oplus A_{1,3}$ & $e_1\cdot e_1=e_2,\;e_3\cdot e_3=e_4,\; e_5\cdot e_5=e_4$  & $(xy,xz,y^2,z^2,x^3)$\\
$A_{1,4}\oplus A_{0,1}$ & $e_1\cdot e_1=e_2,\;e_1\cdot e_3=e_3\cdot e_1=e_4$  & $(x^2,xy,xz,y^2,yz^2,z^3)$ \\
\hline
\end{tabular}
\end{center}
\end{prop}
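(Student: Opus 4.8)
The plan is to construct every decomposable algebra from the indecomposable Jacobi-Jordan algebras of smaller dimension, all of which are already known, and then to match each resulting algebra against Poonen's list. First I would note that if $A$ is a $5$-dimensional decomposable Jacobi-Jordan algebra, then $A=B\oplus C$ for nonzero ideals $B,C$ with $B\cdot C\subseteq B\cap C=(0)$; in particular $\dim B,\dim C\le 4$. By the classification in dimensions $n\le 4$ every such summand is associative with $B^3=C^3=0$, and a direct sum of associative algebras is again associative. Hence every decomposable Jacobi-Jordan algebra of dimension $5$ is automatically associative, and therefore occurs among the $14$ associative Jacobi-Jordan algebras extracted from \cite{POO}.

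Next I would split $B$ and $C$ repeatedly until all summands are indecomposable, so that $A$ is a direct sum of indecomposable Jacobi-Jordan algebras, each of dimension at most $4$. The indecomposable Jacobi-Jordan algebras in these dimensions are read off from the tables above: $A_{0,1}$ in dimension $1$, $A_{1,2}$ in dimension $2$, $A_{1,3}$ in dimension $3$, and $A_{1,4},A_{2,4}$ in dimension $4$. Writing $5$ as an unordered sum of the part-sizes $1,2,3,4$ with at least two parts, and allowing either $A_{1,4}$ or $A_{2,4}$ for a part of size $4$, gives precisely the partitions $1{+}1{+}1{+}1{+}1$, $2{+}1{+}1{+}1$, $2{+}2{+}1$, $3{+}1{+}1$, $3{+}2$, and $4{+}1$ (the last in two ways, since both $A_{1,4}$ and $A_{2,4}$ have dimension $4$). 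This yields exactly the seven algebras listed, and no others, because in each remaining part-size the indecomposable summand is unique.

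For pairwise non-isomorphism I would invoke the Krull--Schmidt property: a finite-dimensional algebra decomposes into indecomposable ideals uniquely up to isomorphism and reordering, so two of these direct sums are isomorphic only when their multisets of indecomposable summands coincide, which never happens here. (Alternatively one separates them by the elementary invariants $\dim A^2$, $\dim\{x\in A:x\cdot A=(0)\}$, and the minimal number of generators.) It then remains to justify the third column, namely that each algebra really is the maximal ideal of the rank-$6$ local ring $K[\dots]/I$ recorded in \cite{POO}. For this I would run, for each ideal $I$, exactly the computation illustrated in the Example above: compute a Gröbner basis of $I$, take the standard monomials as a basis $e_1,\dots,e_5$ of the maximal ideal of the quotient, and read off the products $e_i\cdot e_j$ by reducing modulo the Gröbner basis; comparing the resulting multiplication table with the known table of the corresponding direct sum confirms the identification.

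The main obstacle is bookkeeping rather than conceptual difficulty. I must be certain that the fourteen Jacobi-Jordan entries among Poonen's $25$ have been isolated correctly (those of nilindex $\le 3$, equivalently satisfying $x^3=0$), that each of the seven decomposable ones matches the claimed ideal, and that none of the other seven entries secretly decomposes. The Gröbner/standard-monomial step is routine but must be carried out uniformly across all the ideals to rule out errors, and in several cases a linear change of basis over the algebraically closed field $K$ is needed to bring the raw multiplication table into the normal form $A_{i,j}\oplus\cdots$ displayed in the table.
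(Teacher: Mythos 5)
Your proposal is correct in substance but takes a genuinely different route from the paper. The paper obtains this list purely by \emph{extraction}: it takes Poonen's table of rank-$6$ local commutative unital algebras, keeps the $14$ entries of nilindex $\le 3$ (these correspond to the associative Jacobi-Jordan algebras of dimension $5$), converts each ideal $I$ into a multiplication table by the Gr\"obner-basis/standard-monomial computation shown in the Example, and then sorts the $14$ into $7$ indecomposable and $7$ decomposable ones; no independent argument is given that the decomposable ones are exactly these. You instead \emph{derive} the list a priori: every $5$-dimensional decomposable Jacobi-Jordan algebra splits into indecomposable ideals of dimension $\le 4$, which are already classified ($A_{0,1},A_{1,2},A_{1,3},A_{1,4},A_{2,4}$), so the enumeration reduces to partitions of $5$, and Poonen's table is only needed to fill in the third column. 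Your route buys an internal consistency check on the filtering of Poonen's list (it proves there must be exactly $7$ decomposables and identifies them), at the price of needing (i) the completeness of the $n\le 4$ classification and (ii) a Krull--Schmidt property for non-unital nilpotent commutative algebras. Point (ii) deserves a sentence of justification: it follows from the Krull--Remak--Schmidt theorem for operator groups (taking the operators to be the scalars together with all multiplication maps $L_a$), and the resulting ``central'' automorphism, which moves elements only by annihilator terms, is automatically an algebra isomorphism of the summands; alternatively one can verify non-isomorphism directly for these seven algebras.

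One concrete caveat: your parenthetical fallback via the invariants $\dim A^2$, $\dim\{x : x\cdot A=(0)\}$ and the minimal number of generators does \emph{not} suffice. The algebras $A_{1,2}^{2}\oplus A_{0,1}$ and $A_{1,4}\oplus A_{0,1}$ both have $\dim A^2=2$, a $3$-dimensional annihilator, and $3$ generators (note the third invariant is just $5-\dim A^2$ anyway). To separate them without Krull--Schmidt you need a finer invariant, e.g.\ the dimension of the cone of square-zero elements modulo the annihilator, which is $0$ for $A_{1,2}^{2}\oplus A_{0,1}$ and $1$ for $A_{1,4}\oplus A_{0,1}$.
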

\vspace*{0.5cm}

\section{Classification in higher dimensions}

In dimension $6$ there are already infinitely many isomorphism classes of 
commutative, associative algebras over any algebraically closed field, see \cite{POO}. 
Therefore it is natural to stop here with a classification of Jacobi-Jordan
algebras. On the other hand some subclasses still have a nice classification. In particular we may
consider the classification of Jacobi-Jordan algebras which are {\it not} associative. \\
In dimension $6$ we still have only finitely many different "non-associative" Jacobi-Jordan algebras.

\begin{prop}
Any non-associative Jacobi-Jordan algebra of dimension $6$  over an algebraically closed 
field of characteristic not $2$ and $3$ is isomorphic to exactly one of the following $5$ algebras,
with $\be,\de\in \{0,1 \}$:
\vspace*{0.5cm}
\begin{center}
\begin{tabular}{c|c|c}
Algebra $A$ & Products & $\dim A^2$ \\
\hline
$A_{1,6}$ & $e_1\cdot e_1=e_2,\; e_1\cdot e_3=e_3\cdot e_1=e_5,\; e_1\cdot e_4=e_4\cdot e_1=e_6,$ & $4$\\
         & $e_3\cdot e_3=e_4,\; e_3\cdot e_5=e_5\cdot e_3=-\frac{1}{2}e_6$ & \\
\hline
$A_{2,6}(\be,\de)$ & $e_1\cdot e_1=e_2,\; e_1\cdot e_3=e_3\cdot e_1=-\be e_2,\;e_1\cdot e_4=e_4\cdot e_1=e_5,$ & $3$ \\
$\be,\de \in\{0,1\}$  & $e_1\cdot e_5=e_5\cdot e_1=-\frac{1}{2}e_6,\; e_2\cdot e_4=e_4\cdot e_2=e_6$ & \\
                  & $e_3\cdot e_3=\de e_6,\; e_3\cdot e_5=e_5\cdot e_3=\be e_6$ & \\                
\hline
\end{tabular}
\end{center}
\end{prop}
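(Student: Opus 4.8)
The plan is to follow the same strategy that worked in dimension $5$: reduce to the nonassociative case, show that such an algebra necessarily has nilindex $3$ with $A^3 \neq 0$, and then exploit the multilinear Jordan identities to pin down the structure constants. First I would separate the problem into associative and nonassociative algebras. If $A^3 = 0$ the algebra is associative (every product lies in the center and associativity is immediate), so any nonassociative $A$ must satisfy $A^3 \neq 0$; consequently $A$ has nilindex exactly $3$, since $x^3 = 0$ for all $x$ already holds by \eqref{2}, and nilindex $\le 2$ would force $A^2 = 0$. Thus there exist $x,y \in A$ with $y \cdot x^2 \neq 0$, and I would reuse the identities
\begin{align*}
x^2\cdot y+2(x\cdot y)\cdot x & = 0, \\
x^2\cdot (x\cdot y) & = 0,\\
x^2\cdot (y\cdot z)+2(x\cdot y)\cdot(x\cdot z) & = 0,\\
2(x\cdot y)^2+x^2\cdot y^2 & = 0,
\end{align*}
obtained by linearizing $x^3 = 0$, exactly as in the previous proposition.

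Next I would organize the classification by the invariant $\dim A^2$, which the statement records as either $4$ (for $A_{1,6}$) or $3$ (for the family $A_{2,6}(\be,\de)$). The upper bound on $\dim A^2$ for a nilindex-$3$ Jordan algebra (the analogue of Lemma 1.4 of \cite{ELS}, which gave $\dim A^2 \le 3$ in dimension $5$) governs which case occurs; I would first establish the correct bound in dimension $6$ and then treat each value of $\dim A^2$ separately. Within each case the goal is to produce a generating element $x$ with $x^2 \neq 0$, extend $\{x, x^2\}$ to an adapted basis using elements of the form $y$, $y\cdot x$, $y \cdot x^2$ and a second generator, and then use the four displayed identities to force all remaining products into the stated normal form. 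The two-parameter family $A_{2,6}(\be,\de)$ with $\be,\de \in \{0,1\}$ reflects the residual discrete choices (e.g.\ whether certain structure constants can be scaled to $0$ or normalized to $1$ over an algebraically closed field) that survive after the change of basis; I would show that scaling and translating basis vectors reduces every continuous parameter to one of these four combinations and that the four are pairwise non-isomorphic.

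The main obstacle will be the classification up to isomorphism rather than the mere derivation of relations: in dimension $6$ the automorphism group acting on the structure constants is large enough that many superficially different product tables coincide, so the genuine work is to normalize completely and then to prove that $A_{1,6}$ and the four members of $A_{2,6}(\be,\de)$ are mutually non-isomorphic. For the normalization I would rely on the freedom to rescale the generator $x$ (which scales $x^2$, $y \cdot x$, $y \cdot x^2$ by controlled powers) and to add multiples of higher-weight basis vectors to $y$, mirroring the substitution $e_4 = y - \tfrac12\al x^2 - \tfrac12\ga x$ used in dimension $5$. For the non-isomorphism claims I would extract isomorphism invariants — $\dim A^2$ separates $A_{1,6}$ from the family, and within the family I expect invariants such as $\dim(A \cdot A^2)$, the rank of the map $v \mapsto v^2$ on $A/A^2$, or the dimension of $\{a : a \cdot A = 0\}$ to distinguish the four choices of $(\be,\de)$ and to confirm that each listed algebra does satisfy \eqref{1} and \eqref{2} while failing associativity.
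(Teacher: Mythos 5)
Your overall skeleton agrees with the paper's: reduce to the case $A^3\neq 0$ of nilindex $3$, split on $\dim A^2$, and normalize a basis using the linearized identities. But the paper does not rederive the structure theory; it quotes Elgueta--Suazo \cite{ELS2}, who proved that a $6$-dimensional Jordan nilalgebra of nilindex $3$ with $A^3\neq 0$ satisfies $3\le\dim A^2\le 4$, that $\dim A^2=4$ forces a unique algebra (namely $A_{1,6}$), and that $\dim A^2=3$ yields a two-parameter family $A(\be,\de)$. The paper's own contribution is only the final step: explicit isomorphisms (a $6\times 6$ matrix is exhibited for $A(\be,0)\simeq A(1,0)$, $\be\neq 0$, and analogous maps for the other cases) showing that the family collapses to the four classes $(\be,\de)\in\{0,1\}^2$. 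Your plan to redo all of this from scratch is legitimate in principle, but every step that is actually hard is left as an unexecuted intention: you do not establish the bound $\dim A^2\le 4$ (the dimension-$5$ bound $\dim A^2\le 3$ from Lemma 1.4 of \cite{ELS} does not transfer verbatim), and you do not carry out the basis normalization in either case, where one now needs two, respectively three, generators modulo $A^2$ and the bookkeeping with the four identities is substantially heavier than in dimension $5$.

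The one place where a concrete suggestion of yours would actually fail is the separation of the four algebras $A_{2,6}(\be,\de)$. From the product table one computes $A\cdot A^2=\s\{e_6\}$ for all four choices of $(\be,\de)$, so $\dim(A\cdot A^2)$ is constantly $1$ and distinguishes nothing; and the annihilator $\{a: a\cdot A=0\}$ has dimension $2$ for $(\be,\de)=(0,0)$ but dimension $1$ for the other three choices, so it isolates only $(0,0)$. Separating $(0,1)$, $(1,0)$ and $(1,1)$ requires finer invariants, for instance the induced symmetric bilinear map $A/A^2\times A/A^2\to A^2/(A\cdot A^2)$ together with the behaviour of squaring on its radical; this is precisely the part that your outline leaves open (and that the paper itself dispatches with the assertion that it is ``not difficult to see by explicit calculations''). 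To turn your proposal into a proof you would need either to carry out the Elgueta--Suazo computations in full or to cite them, and in either case to supply a genuine pairwise non-isomorphism argument for the four members of the family.
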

\vspace*{0.5cm}
\begin{proof}
It is shown in \cite{ELS2} that a Jordan nilalgebra of nilindex $3$ satisfies $A^3=0$, and hence
is associative, or satisfies $A^3\neq 0$ and $3\le \dim A^2\le 4$. For $\dim A^2=4$ and $A^3\neq 0$ there is 
only one algebra \cite{ELS2}, which is isomorphic to $A_{1,6}$. It is indeed not associative, 
since we have $(e_1\cdot e_3)\cdot e_3=-\frac{1}{2}e_6$ and $e_1\cdot (e_3\cdot e_3)=e_6$. \\
For $\dim A^2=3$ there is a $2$-parameter family of algebras $A(\be,\de)$. It is shown in \cite{ELS2}
that these are the only ones with $\dim A^2=3$, but the isomorphism 
classes of the algebras $A(\be,\de)$ are not determined. However, it is not difficult to see by 
explicit calculations that we obtain exactly 
four algebras, i.e., $A(0,0),\, A(0,1),\, A(1,0),\, A(1,1)$ up to isomorphism. Let us give an example.
For $\be\neq 0$ we have an algebra isomorphism $\phi\colon A(\be,0)\simeq A(1,0)$ given by
\[
\phi=\begin{pmatrix}
2\be        & 0    & 0  & 0                & 0  & 0 \\
0           & 0    & 0  & 0                & -1 & 0 \\
1           & 0    & 0  & \frac{1}{2\be^2} & 0  & 0 \\
\frac{1}{2} & 0    & -1 & 0                & 0  & 0 \\
0           & 2\be & 0  & 0                & 0  & 0 \\
0           & 0    & 0  & 0                & 0  & 1 \\
\end{pmatrix},
\]
with $\det(\phi)=2$. Similarly we obtain $A(0,\de)\simeq A(0,1)$ for $\de\neq 0$ and  $A(\be,\de)\simeq A(1,1)$
for $\be,\de\ne 0$. We denote these algebras by $A_{2,6}(\be,\de)$ in the above table. They are not associative,
since $(e_4\cdot e_1)\cdot e_1=-\frac{1}{2}e_6$ and $e_4\cdot (e_1\cdot e_1)=e_6$. 
\end{proof}

\end{document}